\numberwithin{equation}{section}
\newtheorem{thm}{Theorem}
\numberwithin{thm}{section}
\newtheorem*{thm*}{Theorem}
\newtheorem*{con*}{Conjecture}
\newtheorem{lemma}[thm]{Lemma}
\theoremstyle{definition}
\newtheorem{remark}[thm]{Remark}
\DeclareMathOperator{\Aut}{Aut}
\DeclareMathOperator{\GL}{GL}
\DeclareMathOperator{\tuC}{C}
\DeclareMathOperator{\tuD}{D}
\DeclareMathOperator{\tuB}{B}
\DeclareMathOperator{\tuBd}{B^\bullet}
\DeclareMathOperator{\GP}{G}
\DeclareMathOperator{\Mat}{Mat}
\DeclareMathOperator{\ZG}{Z}
\DeclareMathOperator{\rk}{rk}
\DeclareMathOperator{\irk}{irk}
\DeclareMathOperator{\Rad}{rad}
\DeclareMathOperator{\irr}{irr}
\DeclareMathOperator{\degprim}{degprim}
\DeclareMathOperator{\prim}{prim}
\newcommand{\N}{\mathbb{N}}
\newcommand{\Q}{\mathbb{Q}}
\newcommand{\F}{\mathbb{F}}
\newcommand{\Lee}{\mathcal{L}}
\newcommand{\Z}{\mathbb{Z}}
\renewcommand{\phi}{\varphi}
\renewcommand{\leq}{\leqslant}
\renewcommand{\geq}{\geqslant}
\renewcommand{\epsilon}{\varepsilon}
\renewcommand{\P}{\mathbb{P}}
\newcommand{\alg}{K}
\newcommand{\ff}{\mathbb{F}_p}
\newcommand{\varaff}[1]{\mathcal{V}_{\mathrm{aff}}(#1)}
\newcommand{\varproj}[1]{\mathcal{V}_{\mathrm{proj}}(#1)}
\newcommand{\gen}[1]{\langle #1 \rangle}
\newcommand{\cor}[1]{\mathcal{#1}}
\renewcommand\subsection{\@startsection{subsection}{2}%
	\z@{.5\linespacing\@plus.7\linespacing}{-.5em}%
	{\normalfont \bfseries}}
\begin{document}

\title[Geometric invariants for $p$-groups of class 2 and exponent $p$]{
Geometric invariants for $p$-groups \\ of class 2 and exponent $p$}

\author[E.\ A.\ O'Brien]{E.\ A.\ O'Brien}
\address[E.\ A.\ O'Brien]{
Department of Mathematics,  
University of Auckland,
Private Bag 92019,
Auckland, 
New Zealand
}
\email{e.obrien@auckland.ac.nz}

\author[Mima Stanojkovski]{Mima Stanojkovski}
\address[Mima Stanojkovski]{
Dipartimento di Matematica, Universit\`a di Trento
}
\email{mima.stanojkovski@unitn.it}

\makeatletter
\@namedef{subjclassname@2020}{
 \textup{2020} Mathematics Subject Classification}
\makeatother

\subjclass[2020]{20D15, 14M12, 15A69}
\keywords{Isomorphism testing of finite $p$-groups, invariants, 
rank loci.}

\thanks{
O’Brien was supported by the Marsden Fund of New Zealand Grant 23-UOA-080
and by a Research Award of the Alexander von Humboldt Foundation.
Stanojkovski is a member of the Indam
group GNSAGA and is funded by the Italian program Rita Levi
Montalcini, edition 2020. We thank Bettina Eick, 
Fulvio Gesumundo, Josh Maglione, 
M.F.\ Newman, and  Christopher Voll for helpful discussions.
We thank the referee for helpful guidance on the content and exposition. 
}

\begin{abstract}
We introduce geometric invariants for 
$p$-groups of class $2$ and exponent $p$.
We report on their effectiveness in distinguishing 
among 5-generator $p$-groups of this type.
\end{abstract}

\maketitle

\setcounter{tocdepth}{2}

\section{Introduction}

The problem of deciding whether two 
presentations present the same group was 
formulated by Dehn \cite{Dehn11}. 
Adian \cite{Adian58} and Rabin \cite{Rabin58} showed that the 
isomorphism problem for finitely presented groups is undecidable.

A typically easier task is to establish that two groups are 
non-isomorphic by exhibiting 
distinguishing invariants.  This approach is particularly 
useful in providing independent evidence 
that an existing list of groups  -- perhaps obtained theoretically --
is irredundant.  But families of finite $p$-groups,
where the prime $p$ is a parameter in the group descriptions,
pose particular challenges. 
It is difficult to find ``natural" invariants which distinguish among 
very similar groups -- and it may be impossible
to compute their values for moderate values of the prime. 
For example, power maps are sometimes used
to distinguish among the 267 groups of order $2^6$. 

To set a context for our work, we review the algorithm 
of O'Brien \cite{OBrien94} 
in a special case: decide 
isomorphism between two $n$-generator groups, 
$G$ and $H$, of order $p^{n + d}$, nilpotency class 2 and exponent $p$.
The Frattini quotient of $G$ coincides with that of $H$,
and the automorphism group of this quotient is $\GL(V)$ where $V=\F_p^n$.  
We write down the action of $\GL(V)$ on the exterior 
square $\Lambda^2 (V)$, a space of dimension $n (n - 1)/ 2$.
To each of $G$ and $H$ we associate 
a $d$-dimensional subspace of $\Lambda^2 (V)$.
Now $G \cong H$ if and only if
the duals of these two subspaces are in the same $\GL(V)$-orbit.
The length of this orbit 
may prevent its construction. 
In summary, a complete solution to the isomorphism
problem for this family of groups depends on knowledge
of the $\GL(V)$-orbits of $d$-dimensional spaces of $\Lambda^2(V)$.
Sun \cite{Sun23} recently announced a new
algorithm with running time $m^{O((\log m)^{5/6})}$ 
to decide isomorphism between two such groups of order $m$.

Standard group-theoretic invariants for $p$-groups of class 2 and 
exponent $p$ often coincide. 
 We introduce invariants from algebraic geometry
which apply specifically to these groups.
The invariants are extracted from the skew-symmetric matrices of linear forms
that these groups determine.
We demonstrate that the invariants are discriminating -- 
they sometimes partition a family
up to isomorphism -- and they can be computed 
for a range of primes.  Our invariants are computed in
affine varieties, rather than in the $p$-group as for 
many existing invariants, and successful computations 
seem less dependent on the size of the prime. 
We plan to investigate the effectiveness of these novel 
invariants for other related questions. 

The structure of the paper is the following. 
In ~\cref{sec:gps-from-tensors}
we summarise the standard Baer correspondence 
between $p$-groups  of class $2$  and exponent $p$ 
and skew-symmetric  matrices of linear forms over $\ff$. 
A consequence is 
that the rank loci of a skew-symmetric  matrix of linear  forms over
$\F_p$ are determined  by the isomorphism class  of the corresponding
group. 
\cref{th:invariants} lists group invariants attached  to the rank ideals.  
In Section \ref{sec:4-3} we illustrate how these 
distinguish up to isomorphism the $4$-generator groups of order $p^7$, 
class 2, and exponent $p$.  In \cref{sec:5-gen}, 
building on work of Brahana \cite{Brah51, Brah58}, Copetti \cite{Cop04}, 
and Vaughan-Lee \cite{VL15}, we explore the corresponding 
$5$-generator groups of order dividing $p^9$.  
We distinguish these with one exception: two groups of 
order $p^8$.  In \cref{thm:lee} we show that the examples of \cite{Lee16} 
motivated by Higman's PORC conjecture have minimal order.   
An explicit discussion of the 
well-known connection with tensors appears in~\cite{OBS24}.


\section{Geometric invariants associated to groups}\label{Group-section}
Let $\alg[x_1,\ldots,x_d]$ be the
polynomial ring over a field $\alg$ in the indeterminates
$x_1,\ldots,x_d$. 
We write ${\bm x}=(x_1,\ldots,x_d)$ and
$\alg[{\bm{x}}]=\alg[x_1,\ldots,x_d]$. Let $\alg[{\bm x}]_1$ 
be the subspace of linear forms of $\alg[{\bm{x}}]$ and
let $\Mat_{n\times m}(\alg[{\bm x}]_1)$ be 
the space of $m\times n$ matrices of linear forms in
$\alg[{\bm{x}}]$, writing $\Mat_{n}(\alg[{\bm x}]_1)$ if $n=m$. 
If $X \subset \alg^d$, then $\langle X \rangle_K$ is the $K$-span of $X$.  
Let $\varaff{I}$ be the affine variety of a homogeneous ideal 
$I$ of $\alg[{\bm x}]$ in $\alg^{d}$ and
let $\varproj{I}$ be the corresponding projective variety in
$\mathbb{P}^{d-1}(\alg)$. 
See \cite[Chap.\ 1]{CLO15} for standard concepts in algebraic geometry. 

\subsection{Groups from matrices and back}\label{sec:gps-from-tensors}
We briefly review the well-known Baer correspondence \cite{Baer/38}, 
a consequence of MacLane's Universal Coefficients Theorem
\cite[\S11.4]{Rob96}. Since we consider only 
$p$-groups of class 2 and exponent $p$, 
we restrict to odd primes $p$. 

Let $n$ and $d$ be positive integers and let $\ff$ be the field of $p$ elements. Let ${\bm y}=(y_1,\ldots,y_d)$ 
be a vector of independent algebraic variables. Let 
$\tuB({\bm y})\in\Mat_n(\alg[{\bm y}]_1)$ be a skew-symmetric matrix of linear
forms. Write $V=\alg^n$ and $W=\alg^d$ and let
 $W$ have standard basis $(f_1,\ldots,f_d)$.  
Now $\tuB$ defines the alternating map 
\begin{equation}\label{eq:t}
 t:V\times V \longrightarrow W, \quad (v,v') \longmapsto t(v,v')=\sum_{i=1}^d v^{\top}\tuB(f_i)v'f_i.
\end{equation}
Defining an operation $\star$ on $V \times W$ by 
\[
(v,w)\star(v',w')=\left( v+v', w+w' + \frac{1}{2}t(v,v') \right)
\]
yields a group $\GP_{\tuB}(\alg)$.
Clearly, a change of basis for $V$ or $W$ does not change the isomorphism 
class of $\GP_{\tuB}(\alg)$. Write 
 \[
 \tuB({\bm y})=\left(\tuB_{ij}^{(1)}y_1+\ldots + \tuB_{ij}^{(d)}y_d \right)_{ij} \quad \textup{with }\ \ \tuB_{ij}^{(\kappa)}\in\Mat_n(\alg).
 \]
The \emph{adjoint} of $\tuB({\bm y})$ is
 \begin{equation}
 \tuBd({\bm x}) = \left( \sum_{j=1}^n
 \tuB^{(\kappa)}_{ij}x_j\right)_{i\kappa} \in\Mat_{n\times d}(\alg[x_1,\dots,x_n]_1). 
\end{equation}
In the following,  $I_1(\tuB)\subseteq \alg[{\bm y}]$  denotes the ideal generated by the entries of $\tuB$; define analogously $I_1(\tuBd)\subseteq \alg[{\bm x}]$. See also \cref{sec:invs}.
\begin{lemma}\label{lem:I1}
Assume that $\alg=\ff$ and write $G=\GP_{\tuB}(\ff)$. 
\begin{enumerate}[label=$(\arabic*)$]
 \item The order of $G$ is $p^{n+d}$.
 \item The class of $G$ is at most $2$, 
where equality holds if and only if $\tuB$ is not the zero matrix.
 \item The exponent of $G$ is $p$.
 \item\label{it:I1-3} The derived group $G'$ of $G$ is 
an $\ff$-subspace of $\{0 \} \times W$ and has dimension 
 $$\dim_{\ff}G'=d-\dim(\varaff{I_1(\tuB)}).$$
 \item\label{it:I1-4} The centre $\ZG(G)$ of $G$ is 
an $\ff$-subspace of $V\times W$ containing $\{0\}\times W$ and has dimension 
 $$\dim_{\ff}\ZG(G)=d+\dim(\varaff{I_1(\tuBd)}).$$
\end{enumerate}
\end{lemma}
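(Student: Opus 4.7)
The plan is to reduce each statement to an explicit computation with the operation $\star$, so that the group-theoretic data become linear-algebraic data about the tensor $t$ and the matrices $B^{(1)},\ldots,B^{(d)}\in\Mat_n(\ff)$ defined by $\tuB(\mathbf{y})=\sum_k y_k B^{(k)}$. Part (1) is immediate: $G$ is $V\times W$ as a set, so $|G|=p^n p^d = p^{n+d}$.

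For (2) and (3), the crucial calculations are: (a) $(v,w)^{-1}=(-v,-w)$, using that $t$ is alternating; (b) elements of $\{0\}\times W$ are central, since $t(0,\cdot)=t(\cdot,0)=0$; (c) the commutator formula
\[
[(v,w),(v',w')]=(0,\,t(v,v')),
\]
which I would get by expanding $a^{-1}b^{-1}ab$ step by step and collecting terms, using alternation to kill the $t(v,v)$ and $t(v',v')$ pieces. From (c) it follows that $G'\subseteq\{0\}\times W\subseteq \ZG(G)$, giving class $\leq 2$, with equality exactly when $t\neq 0$, i.e.\ when some $B^{(k)}$ is nonzero, i.e.\ $\tuB\neq 0$. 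For the exponent, I would prove by induction that $(v,w)^k=(kv,kw)$; the induction step uses $(kv,v)^{\text{centre}}$-centrality-style simplification plus $t(kv,v)=kt(v,v)=0$. Since $p$ is odd, $1/2$ makes sense in $\ff$, and taking $k=p$ gives $(v,w)^p=(0,0)$, so the exponent divides $p$.

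For (4), from the commutator formula $G'$ is the $\ff$-span of the image of $t$, sitting inside $\{0\}\times W$. I would then pass to the associated linear map
\[
\phi:W\longrightarrow \Mat_n(\ff),\qquad w\longmapsto \tuB(w)=\sum_{k=1}^d w_k B^{(k)},
\]
and observe two things. First, by the definition of $I_1(\tuB)$ as the ideal of entries of $\tuB(\mathbf{y})$, one has $\varaff{I_1(\tuB)}=\ker\phi$, a linear subspace of $W$, so $\dim \varaff{I_1(\tuB)}=\dim\ker\phi$. Second, dualising $t:\Lambda^2 V\to W$ identifies the image of the dual map $t^{*}$ with $\mathrm{span}\{B^{(1)},\ldots,B^{(d)}\}=\mathrm{im}(\phi)$, and $\dim\mathrm{im}(t)=\dim\mathrm{im}(t^{*})=\dim\mathrm{im}(\phi)$. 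Combining with rank–nullity yields $\dim_{\ff}G'=d-\dim\varaff{I_1(\tuB)}$.

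For (5), since commutators have the shape $(0,t(v,v'))$, an element $(v,w)$ is central iff $t(v,v')=0$ for all $v'\in V$, iff $v^{\top}B^{(k)}v'=0$ for all $v'$ and all $k$, iff $B^{(k)}v=0$ for every $k$ (using skew-symmetry of each $B^{(k)}$). By the very definition of $\tuBd$, whose $\kappa$-th column is $B^{(\kappa)}\mathbf{x}$, this condition is exactly $v\in\varaff{I_1(\tuBd)}$. Hence $\ZG(G)=\varaff{I_1(\tuBd)}\times W$, giving the stated dimension formula. The only real subtlety I anticipate is the clean identification in (4) of $\dim\mathrm{im}(t)$ with $\dim\mathrm{im}(\phi)$ via duality, and being careful that the set-theoretic variety $\varaff{I_1(\tuB)}$ is actually linear so its variety dimension equals its $\ff$-vector-space dimension; everything else is bookkeeping.
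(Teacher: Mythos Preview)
Your proposal is correct and takes essentially the same approach as the paper: your map $\phi:W\to\Mat_n(\ff)$, $w\mapsto\tuB(w)$, is precisely the third flattening $\cor{F}_3^{\mathfrak{t}}$ that the paper invokes, and your centre computation in (5) amounts to identifying $\ker\cor{F}_2^{\mathfrak{t}}$ with $\varaff{I_1(\tuBd)}$. The paper's proof is terser (it skips (1)--(3) entirely and phrases (4) and (5) in the flattening language), but the mathematics is identical.
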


Now let $G$ be a $p$-group 
of class $2$ and exponent $p$. 
Assume that $|G'|=p^d$ and $|G|=p^{n+d}$. 
Clearly 
$V=G/G'$ and $W=G'$
are $\ff$-vector spaces of dimension $n$ and $d$,
respectively. Moreover, the commutator map induces 
an alternating map
$t:V\times V\longrightarrow W$. 
By choosing bases 
of $V$ and
$W$ respectively, we represent $t$ by 
$\tuB({\bm y})\in\Mat_n(\ff[y_1,\ldots,y_d]_1)$,
a skew-symmetric matrix of
linear forms. 
The matrix $\tuBd$ determines the adjoint operator 
on $V$ and its rank loci partition the elements of $G\setminus G'$ 
by centraliser sizes \cite[Lem.~4.4]{StaVoll21}.
If also $G'=\ZG(G)$, then 
$\tuB$ and $\tuBd$ are respectively the matrices
$B$ and $A$ defined in \cite[Def.~2.1]{O'BrienVoll/15}. 

Procedures to construct the Baer-MacLane functor and 
related tensor invariants are part of the {\sc Magma} \cite{Magma}
MultilinearAlgebra package of Maglione and Wilson \cite{FiMaWi19, GITMW}.

\subsection{Varieties and invariants}\label{sec:invs} 
Let $m,n$ and $d$ be positive integers and write 
$N = \min\{m,n\}$. Let ${\bm z}=(z_1,\ldots,z_d)$ be
a vector of independent algebraic variables. 
Let $\tuD({\bm z})\in\Mat_{m\times n}(\alg[{\bm z}]_1)$ be a matrix 
of linear forms. 
\begin{itemize}
 \item For $k\in\{1,\ldots,N\}$, 
the \emph{$k$-th $($determinantal\,$)$ ideal} of $\tuD$ is the homogeneous 
ideal $I_k(\tuD)$ of $\alg[{\bm z}]$ generated by 
the $k\times k$ minors of $\tuD({\bm z})$, while the \emph{$k$-th $($determinantal\,$)$ variety} of $\tuD$ is $V_k=\varaff{I_k(\tuD)}$.
 \item The \emph{ideal rank vector} of $\tuD$ 
is $\irk(\tuD)=(\,I_k(\tuD): k=1,\ldots,N\,)$.
\end{itemize}
 Each $V_i$ is an affine variety in $\alg^d$ and
$\{0\}\subseteq V_1\subseteq V_2\subseteq \ldots \subseteq V_N\subseteq \alg^d$.
See \cite{BV88} for a discussion of determinantal 
ideals and associated varieties. 
\begin{remark}\label{rmk:degree}\label{rmk:primary}
 Let $I$ be a homogeneous ideal in $\alg[x_1,\ldots,x_d]$ and let $m$
be the dimension of $\varproj{I}$. Following
 \cite[Defs.~12.11 and 12.14]{Gath14}, the \emph{degree} of $I$ is 
 the product of $m!$ by the leading coefficient of the Hilbert
 polynomial 
of $I$. If $\varproj{I}$ is empty, then the
 degree of $I$ is $0$. 
The Hilbert polynomial is invariant under change of coordinates
 \cite[Rmk.~12.2]{Gath14}, thus so is the degree of $I$.
Now $I$ is \emph{primary} if, whenever 
$ab\in I$ for $a, b \in R$, 
then $a\in I$, or there exists a positive integer $i$ such that $b^i\in I$.
Also $I$ has 
a \emph{minimal
primary decomposition} in the sense of \cite[Def.~8.20]{Gath13}. Two
minimal decompositions have the same cardinality, 
and the irreducible components
$\varproj{I_1},\ldots,\varproj{I_r}$ do not depend on the chosen
decomposition \cite[Prop.~8.27]{Gath13}. 
For a more detailed treatment,  
see \cite[Chap.\ 3]{Eis95}, 
\cite[\S8]{Gath13}, and \cite[\S12]{Gath14}. 
\end{remark}

\begin{thm}\label{th:invariants}
Let $n$ and $d$ be positive integers, and let ${\bm y}=(y_1,\ldots,y_d)$ and
${\bm x}=(x_1,\ldots,x_n)$ be vectors of independent algebraic
variables. Let $\tuB({\bm y})\in\Mat_n(\ff[{\bm y}]_1)$ be
a skew-symmetric matrix of linear forms and let $\tuBd({\bm
 x})\in\Mat_{n\times d}(\ff[{\bm x}]_1)$ be its adjoint. Write
\begin{itemize}
 \item $\irk(\tuB)=(I_1,\ldots,I_n)$ for the ideal rank vector of $\tuB$, and
 \item $\irk(\tuBd)=(J_1,\ldots,J_{\min\{n,d\}})$ for the ideal
rank vector of $\tuBd$. 
\end{itemize}
For each $k\in\{1,\ldots,n\}$ and $\ell\in\{1,\ldots,\min\{n,d\}\}$, 
the following are invariants of the isomorphism class of $\GP_{\tuB}(\ff)$: 
\begin{enumerate}[label=$(\arabic*)$]
 \item\label{it:inv1} the degrees and dimensions of $I_k$ and $J_{\ell}$;
 \item\label{it:inv2} the degrees and dimensions of the ideals in a primary decomposition of $I_k$ and $J_{\ell}$;
 \item\label{it:inv3} the degrees, the number of rational points, and the number of irreducible components of $\varaff{I_k}$ and $\varaff{J_{\ell}}$;
 \item\label{it:inv4} the dimensions of $\gen{\varaff{I_k}}_{\F_p}$ and $\gen{\varaff{J_{\ell}}}_{\F_p}$.
\end{enumerate}
\end{thm}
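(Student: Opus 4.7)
The plan is to reduce the statement to the invariance of the listed quantities under an $\F_p$-linear change of variables. Suppose $\GP_{\tuB}(\ff)\cong\GP_{\tuC}(\ff)$. By \cref{thm:iso-tensors} there exist $X\in\GL_n(\ff)$ and $Z\in\GL_d(\ff)$ with
\[
\tuC({\bm y})=X\,\tuB({\bm y}Z)\,X^{\top}\quad\text{and}\quad \tuCd({\bm x})=X\,\tuBd({\bm x}X)\,Z^{\top}.
\]
Multiplication on the left and on the right by invertible matrices does not alter the $k\times k$ determinantal ideals: by the Cauchy--Binet formula (equivalently, by repeated application of \cref{lem:ranks}(1) at the level of generators) one has $I_k(XMY)=I_k(M)$ for any $M\in\Mat_{m\times n}(\alg[{\bm z}]_1)$ and invertible $X,Y$. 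Hence $I_k(\tuC)=I_k(\tuB({\bm y}Z))$ and $J_\ell(\tuCd)=J_\ell(\tuBd({\bm x}X))$, and the latter ideals are precisely the images of $I_k(\tuB)$ and $J_\ell(\tuBd)$ under the $\F_p$-algebra automorphisms of $\ff[{\bm y}]$ and $\ff[{\bm x}]$ induced by the invertible linear substitutions ${\bm y}\mapsto{\bm y}Z$ and ${\bm x}\mapsto{\bm x}X$.

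It therefore suffices to verify that each of the listed quantities is invariant under an $\F_p$-linear automorphism $\sigma\colon\ff[{\bm z}]\to\ff[{\bm z}]$ of the form ${\bm z}\mapsto{\bm z}M$ for $M\in\GL_d(\ff)$. Since $\sigma$ is graded, it preserves Hilbert polynomials; in particular it preserves projective dimension and degree (\cref{rmk:degree}), and hence the affine dimension. The corresponding automorphism of $\mathbb{A}^d$ and of $\mathbb{P}^{d-1}$ is an $\F_p$-isomorphism of varieties, so it bijects $\F_p$-rational points, bijects irreducible components, and preserves the dimension of their $\F_p$-linear spans. This settles parts \ref{it:inv1}, \ref{it:inv3}, and \ref{it:inv4}.

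For \ref{it:inv2} we use that primary decomposition is compatible with ring automorphisms: if $I=I_1\cap\cdots\cap I_r$ is a minimal primary decomposition of $I$, then $\sigma(I)=\sigma(I_1)\cap\cdots\cap\sigma(I_r)$ is a minimal primary decomposition of $\sigma(I)$, since the $\sigma(I_j)$ are primary (primariness is preserved by ring isomorphisms) and minimality is preserved as well (\cref{rmk:primary}). Hence $\#\prim(\sigma(I))=\#\prim(I)$, and by the above each $\sigma(I_j)$ has the same degree and dimension as $I_j$, so $\degprim(\sigma(I))=\degprim(I)$. Applying this to $I=I_k(\tuB)$ with $M=Z$, and to $I=J_\ell(\tuBd)$ with $M=X$, yields invariance of all quantities in \ref{it:inv2}. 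The only genuinely delicate point, and the main thing to check, is the coherence of the identifications: one must confirm that $I_k(XMY)=I_k(M)$ holds as an equality of ideals (not merely up to radical), which follows because each $k\times k$ minor of $XMY$ is an $\alg$-linear combination of $k\times k$ minors of $M$ via Cauchy--Binet, and the same argument applied to $X^{-1}(XMY)Y^{-1}=M$ gives the reverse inclusion.
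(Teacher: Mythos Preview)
Your proof is correct and follows essentially the same route as the paper: reduce via \cref{thm:iso-tensors} to showing that the listed quantities are invariant under the linear substitutions induced by $X$ and $Z$, then invoke standard facts about Hilbert polynomials, irreducible components, rational points, and primary decomposition under graded automorphisms. The one place where you are slightly more explicit than the paper is in establishing the ideal-level equality $I_k(XMY)=I_k(M)$ via Cauchy--Binet; the paper instead passes through \cref{lem:ranks} to get a linear isomorphism of the associated varieties and then tacitly relies on this ideal-level correspondence when appealing to \cref{rmk:degree} and \cref{rmk:primary}, so your version closes a small gap in presentation rather than taking a different path.
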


\begin{proof}
Let $\tuC\in\Mat_n(\ff[{\bm y}]_1)$ be such that $\GP_{\tuB}(\ff)$ and
$\GP_{\tuC}(\ff)$ are isomorphic, and let $X$ and $Z$ be such that 
$X\tuB({\bm y}Z)X^{\top}=\tuC({\bm y})$. The existence of $X$ and $Z$ is 
guaranteed by the Baer-MacLane construction 
(see \cite[Thm.~2.13(2)]{MaSta22} for a similar formulation).

Fix $k\in\{1,\ldots,n\}$. 
If $v\in\alg^{d}$, then  
$\rk(\tuB(v))=\rk(X\tuB(Zv)X^\top)=\rk(\tuC(v))$. 
Hence $Z$ induces a linear isomorphism of varieties
$\varphi_Z:\varaff{I_k(\tuB)}\rightarrow\varaff{I_k(\tuC)}$.
Being invariants under isomorphism of varieties, the
quantities from \ref{it:inv3} are preserved. 
Since $\varphi_Z$ is linear, 
$\dim_{\ff
}\gen{\varaff{I_k(\tuB)}}_{\F_p}=\dim_{\ff}\gen{\varaff{I_k(\tuC)}}_{\F_p}$,
thus \ref{it:inv4} holds.
Since $\varphi_Z$ is an isomorphism, 
the dimensions of the ideals $I_k(\tuB)$ and $I_k(\tuC)$ equal those 
of the corresponding varieties $\varaff{I_k(\tuB)}$ and
$\varaff{I_k(\tuC)}$. 
Moreover, the degrees of $I_k(\tuB)$
and $I_k(\tuC)$ coincide by \cref{rmk:degree} and the linearity of $Z$. 
Thus \ref{it:inv1} holds. 
Similarly, we deduce \ref{it:inv2}.
The proof for adjoints is analogous.
\end{proof}

\section{An illustrative example} \label{sec:4-3}
Vaughan-Lee \cite[\S7.4]{VL15} lists the six isomorphism 
classes of 4-generator groups of order $p^7$, class 2, and exponent $p$. 
Following \cref{sec:gps-from-tensors}, 
representatives of these isomorphism classes can be constructed over 
$\F_p$ (with primitive element $\omega$)
from the following matrices.

\begin{alignat*}{3}
\tuB_1(x,y,z)&=\begin{pmatrix}
 0 & x & y & z \\ -x & 0 & 0 & 0 \\ -y & 0 & 0 & 0 \\ -z & 0 & 0 & 0
\end{pmatrix}, \quad &&\tuB_2(x,y,z)&&=\begin{pmatrix}
 0 & x & y & 0 \\ -x & 0 & z & 0 \\ -y & -z & 0 & 0 \\ 0 & 0 & 0 & 0
\end{pmatrix},\\
\tuB_3(x,y,z)&=\begin{pmatrix}
 0 & x & y & 0 \\ -x & 0 & 0 & z \\ -y & 0 & 0 & 0 \\ 0 & -z & 0 & 0 
\end{pmatrix}, \quad &&\tuB_4(x,y,z)&&=\begin{pmatrix}
 0 & x & y & z \\ -x & 0 & z & 0 \\ -y & -z & 0 & 0 \\
 -z & 0 & 0 & 0 
\end{pmatrix}, \\
\tuB_5(x,y,z)&=\begin{pmatrix}
 0 & x & 0 & y \\ -x & 0 & y & 0 \\ 0 & -y & 0 & z \\
 -y & 0 & -z & 0
\end{pmatrix}, \quad &&\tuB_6(x,y,z)&&=\begin{pmatrix}
 0 & x & y & z \\ -x & 0 & 0 & \omega y \\ -y & 0 & 0 & x \\ -z & -\omega y & -x & 0
\end{pmatrix}.
\end{alignat*}
Since $\dim\varaff{I_1(\tuB)} = 0$ for 
$\tuB\in\{\tuB_1,\ldots,\tuB_6\}$, 
\cref{lem:I1} confirms that these define groups of the claimed type.
The adjoint matrices of $\tuB_1,\ldots,\tuB_6$ are:
 \begin{alignat*}{3}
\tuB_1^\bullet(x,y,z,w)&=\begin{pmatrix}
 y & z & w \\ -x & 0 & 0 \\ 0 & -x & 0 \\ 0 & 0 & -x
\end{pmatrix}, \quad &&\tuB_2^\bullet(x,y,z,w)&&=\begin{pmatrix}
 y & z & 0 \\ -x & 0 & z \\ -0 & -x & -y \\ 
 0 & 0 & 0
\end{pmatrix},
\end{alignat*}
 \begin{alignat*}{3}
\tuB_3^\bullet(x,y,z,w)&=\begin{pmatrix}
 y & z & 0 \\ -x & 0 & w \\ 0 & -x & 0 \\ 0 & 0 & -y 
\end{pmatrix}, \quad &&\tuB_4^\bullet(x,y,z,w)&&=\begin{pmatrix}
 y & z & w \\ -x & 0 & z \\ 0 & -x & -y \\
 0 & 0 & -x 
\end{pmatrix}, \\
\end{alignat*}
 \begin{alignat*}{3}
\tuB_5^\bullet(x,y,z,w)&=\begin{pmatrix}
 y & w & 0 \\ -x & z & 0 \\ 0 & -y & w \\
 0 & -x & -z 
\end{pmatrix}, \quad &&\tuB_6^\bullet(x,y,z,w)&&=\begin{pmatrix}
 y & z & w \\ -x & \omega w & 0 \\ w & -x & 0 \\ 
 -z & -\omega y & -x
\end{pmatrix}.
\end{alignat*}
Let $n_p(I)$ denote the number of $\ff$-rational points of $\varaff{I}$. 
The invariants summarised in Table \ref{4-3-invariants} 
separate the groups up to isomorphism.

\begin{table}[htb]
\begin{center}
{
\begin{tabular}{r|r|r|r|r|r|r}
 $\tuB$ & $\tuB_1$ & $\tuB_2$ & $\tuB_3$ & $\tuB_4$ & $\tuB_5$ & $\tuB_6$ \\ \hline 
 $n_p(I_4(\tuB))$ & $p^3$ & $p^3$ & $2p^2-p$ & $p^2$ & $p^2$ & $p$ \\ \hline
 $n_p(I_3(\tuBd))$ & $p^3$ & $p^4$ & $2p^3-p^2$ & $p^3$ & $p^3+p^2-p$ & $p^2$
\end{tabular}
}
\end{center}
\caption{Invariants of $\tuB_1,\ldots,\tuB_6$}
\label{4-3-invariants}
\end{table}

\section{The $5$-generator $p$-groups of class $2$ and exponent $p$}\label{sec:5-gen}
Brahana \cite{Brah51,Brah58} used concepts from projective geometry
to study some of the 5-generator groups of class 2 and exponent $p$:  
in particular, commutators in such a group 
arise as solutions of a system of homogeneous quadratic equations 
called Pl\"ucker relations; cf.\ \cite[\S4.2]{Cop04}. 
As in other treatments, his construction of 
the groups up to isomorphism reduces to an orbit problem.  
Copetti \cite{Cop04} presents a modern and highly accessible 
treatment of his work.  
Vaughan-Lee \cite{VL15} used an approach similar to 
the $p$-group generation algorithm \cite{OBrien90} to 
determine the groups of order 
dividing $p^8$ having class $2$ and exponent $p$.

A $5$-generator $p$-group of class $2$ and exponent $p$
has order $p^{5 + d}$ where $1 \leq d \leq 10$.
Recall from the introduction that 
we consider the action of $\GL(V)$, where $V = \F_p^5$,
on the exterior square $\Lambda^2 (V)$, a space of dimension $10$.
Two groups of order $p^{5+d}$ are isomorphic
if and only if the duals of their associated $d$-dimensional spaces
of $\Lambda^2(V)$ are in the same $\GL(V)$-orbit. 
For $V = \F_p^n$, there are $\lfloor n / 2 \rfloor$ orbits of 
1-dimensional spaces on $\Lambda^2(V)$: each orbit is determined 
by the rank of the associated form~\cite[Thm.~3.5]{Cop04}.
Brooksbank, Maglione, and Wilson \cite{BMW17} give
a polynomial-time algorithm to decide isomorphism 
between two such groups of order $p^{n + 2}$.
Hence, by exploiting well-known properties
of duality (cf. for example \cite[Chap.\ 2]{Cop04}), 
we consider only $d \in \{3,4, 5\}$. 

\subsection{Derived groups of order $p^3$}
Copetti \cite{Cop04} and Vaughan-Lee \cite{VL15} showed that 
there are $22$ isomorphism classes of 5-generator groups of order 
$p^8$, class 2, and exponent $p$. 
Case ($j$) of \cite[Thm.~7.22]{Cop04} 
corresponds via the Lazard Correspondence to 
Group 8.5.$j$ of \cite[\S8.5]{VL15}.

The matrices $\tuB\in\Mat_5(\ff[y_1,y_2,y_3]_1)$, 
extracted from \cite[\S8.5]{VL15} and described 
as in \cref{sec:gps-from-tensors}, are available at \cite{GITPAGE}.
Since $\tuB$ is skew-symmetric, 
$I_5(\tuB)$ is necessarily the zero ideal, and 
no information on the isomorphism class of $\GP_{\tuB}(\ff)$ is gained 
from the rank $5$ locus of B. 

For $p \in \{3,\ldots,37\}$,
Table \ref{5-3-invariants} records the values of invariants 
from \cref{th:invariants} for these 22 groups. 
There, $\deg(I)$ denotes the degree of an ideal $I$ and 
$\degprim(I)$ records 
the degrees of the ideals in a 
minimal primary decomposition of $I$. 
By comparing Tables \ref{4-3-invariants} and \ref{5-3-invariants}, 
we observe that groups (1)--(6) are direct products 
of the six groups from \cref{sec:4-3} with a group of order~$p$.
Twenty of the $22$ isomorphism classes are identified 
using just three of the invariants
associated with the rank $4$ locus of $\tuB$ and 
the rank $3$ locus of $\tuBd$. 
The invariants from \cref{th:invariants} do 
not distinguish between (14) and (15).  
Yet, Vaughan-Lee \cite[\S8.5]{VL15} showed that 
the automorphism groups of $\GP_{\tuB_{14}}(\ff)$ 
and $\GP_{\tuB_{15}}(\ff)$ have orders 
$2(p-1)(p^2-1)p^{17}$ and $2(p-1)^3p^{17}$ respectively.

\begin{table}[ht]
\begin{center}
{
\begin{tabular}{c|r|c|r|r}
 Case & $n_p(I_4(\tuB))$ & $\deg(I_4(\tuB))$ & $n_p(I_3(\tuBd))$ & $\degprim(I_3(\tuBd))$ \\ \hline 
 (1) & $p^3$ & $1$ & $p^4$ & $(2,3)$ \\ \hline
 (2) & $p^3$ & $1$ & $p^5$ & $(1)$ \\ \hline
 (3) & $2p^2-p$ & $4$ & $2p^4-p^3$ & $(1,1,4)$ \\ \hline
 (4) & $p^2$ & $4$ & $p^4$ & $(2,3)$ \\ \hline
 (5) & $p^2$ & $4$ & $p^4+p^3-p^2$ & $(2,4)$ \\ \hline
 (6) & $p$ & $4$ & $p^3$ & $(2,3)$ \\ \hline
 (7) & $1$ & $9$ & $p^3$ & $(1,3)$ \\ \hline
 (8) & $1$ & $6$ & $p^3-p^2+p$ & $(2,2)$ \\ \hline
 (9) & $1$ & $0$ & $p^3$ & $(4)$ \\ \hline
 (10) & $p$ & $10$ & $p^3$ & $(4)$ \\ \hline
 (11) & $p$ & $9$ & $2p^3-p^2$ & $(1,3)$ \\ \hline
 (12) & $p$ & $6$ & $2p^3-p^2$ & $(2,2)$ \\ \hline
 (13) & $p$ & $9$ & $2p^3-p^2$ & $(1,1,2)$ \\ \hhline{|=|=|=|=|=|}
 (14) & $p$ & $3$ & $2p^3-p$ & $(1,3)$ \\ \hline
 (15) & $p$ & $3$ & $2p^3-p$ & $(1,3)$ \\ \hhline{|=|=|=|=|=|}
 (16) & $2p-1$ & $9$ & $3p^3-2p^2$ & $(1,1,2)$ \\ \hline
 (17) & $2p-1$ & $6$ & $3p^3-p^2-p$ & $(1,1,2)$ \\ \hline
 (18) & $3p-2$ & $9$ & $4p^3-3p^2$ & $(1,1,1,1)$ \\ \hline
 (19) & $p^2+p-1$ & $2$ & $p^4+p^3-p^2$ & $(1,1,2)$ \\ \hline
 (20) & $p^2$ & $2$ & $p^4$ & $(1,2,4)$ \\ \hline
 (21) & $p^2$ & $2$ & $p^4+p^3-p^2$ & $(1,1,6)$ \\ \hline
 (22) & $p^2$ & $2$ & $p^4+p^3-p^2$ & $(1,1,2)$ 
\end{tabular}
}
\end{center}
\caption{Invariants for $5$-generator groups with derived group of order $p^3$}\label{5-3-invariants}
\end{table}

\subsection{Quasi-polynomiality} 
Motivated by Higman's PORC conjecture, 
Lee \cite{Lee16} constructed a family $\Lee$ of 
groups of order $p^9$ for all primes $p \geq 5$, 
and showed that its cardinality 
is not determined by a quasi-polynomial function in $p$. 
We briefly review his construction.
Let $\tuB \in\Mat_5(\Z[y_1,y_2,y_3])$  where 
\[
\tuB =\begin{pmatrix}
 0 & 0 & 0 & y_1 & y_2 \\
 0 & 0 & 0 & y_3 & y_1 \\
 0 & 0 & 0 & 2y_2 & y_3 \\
 -y_1 & -y_3 & -2y_2 & 0 & 0 \\
 -y_2 & -y_1 & -y_3 & 0 & 0
\end{pmatrix}.
\]
The group $\GP_{\tuB}(\F_p)$ has order $p^{5+3}=p^8$. 
Its isomorphism type is one of (7), (13) and (18);  
their projective varieties $\varproj{I_4(\tuB)}$ 
have respectively $0$, $1$, and $3$ points in $\P^2(\F_p)$. 
The isomorphism type depends on $p$, each occurs infinitely often, 
and the function
\begin{equation}\label{eq:aut-p}
\mathcal{P}=\{\textup{primes}\}\longrightarrow \N, \quad p\longmapsto |\Aut(\GP_{\tuB}(\F_p))|
\end{equation}
is {\em not} quasi-polynomial. 
The groups in $\Lee$ are \emph{immediate descendants} 
\cite{OBrien90} of $\GP_{\tuB}(\F_p)$: 
they have order $p^{9}$, class $3$, and exponent $p$. 
They are determined by orbit representatives 
for the action of $\Aut(\GP_{\tuB}(\F_p))$ on the duals of 1-spaces in 
$\F_p^{13}$; the number
of orbits, or equivalently the cardinality of $\Lee$, varies 
according to $|\Aut(\GP_{\tuB}(\F_p))|$, 
and so it is also not quasi-polynomial. 
We show that the groups in $\Lee$ have minimal order. 

\begin{thm}
 Assume that 
$n+d\leq 7$ and let $\tuB\in\Mat_n(\Z[y_1,\ldots,y_d]_1)$ be skew-symmetric. 
The function 
$\cor{P}\longrightarrow \N$, defined by $p\mapsto |\Aut(\GP_{\tuB}(\F_p))|$,
is quasi-polynomial.
\end{thm}

\begin{proof}
Assume that $d\leq 2$. Let  $G=\GP_{\tuB}(\Q)$ and 
$G_p=\GP_{\tuB}(\F_p)$. If $\ZG(G)\neq G'$, then we replace $n$ by 
a smaller value, 
so we assume without loss of generality that $\ZG(G)=G'$. The number 
of primes $p$ for which $\ZG(G_p)\neq G_p'$ is finite, since 
the reduction modulo 
$p$ of $\varaff{I_1(\tuB)}$ or 
$\varaff{I_1(\tuBd)}$ now has positive 
dimension (instead of $0$); cf.\ \cref{lem:I1}. 
These primes do not impact on quasi-polynomiality.  
By requiring that 
$\ZG(G_p)= G_p'$ and comparing with \cite{VL15},
we deduce that, for $(n,d)\neq (4,2), (5,2)$, the function 
$p\mapsto |\Aut(\GP_{\tuB}(\F_p))|$ is constant. 
For the two remaining cases,
the list in \cite{VL15} demonstrates that variation over 
the primes is determined either by the dimension of 
$\varaff{I_1(\tuB)}$, or by a given integer being a 
square modulo a prime. Both conditions are quasi-polynomial.

We now assume that $d \geq 3$, 
so the only possible parameters are $(n,d)\in\{(3,3),(4,3)\}$.
If $(n,d)=(3,3)$ then \eqref{eq:aut-p} is quasi-polynomial.  
Consider $(n,d)=(4,3)$. Now $I_4(\tuB)$ is principal, generated by 
the square of a homogeneous quadratic polynomial $f\in\Z[y_1,y_2,y_3]$. 
Using the notation of \cref{sec:4-3}, the projective varieties given by 
the zero locus of $f$ are the following:
\begin{itemize}
\item $\varproj{I_4(\tuB_1)}$ and $\varproj{I_4(\tuB_2)}$ equal $\P^2{(\Q)}$;
 \item $\varproj{I_4(\tuB_3)}$ is a union of two lines in $\P^2{(\Q)}$;
 \item $\varproj{I_4(\tuB_4)}$ is a line in $\P^2{(\Q)}$;
 \item $\varproj{I_4(\tuB_5)}$ is an ellipse in $\P^2{(\Q)}$;
 \item $\varproj{I_4(\tuB_6)}$ is a point in $\P^2{(\Q)}$.
\end{itemize}
If $f=0$, then $\GP_{\tuB}(\F_p)$ is isomorphic to either 
$\GP_{\tuB_1}(\F_p)$ or $\GP_{\tuB_2}(\F_p)$. 
By \cite[\S7.4]{VL15}, their automorphism groups have the same order, 
and so \eqref{eq:aut-p} is quasi-polynomial. 
Assume that $f\neq 0$.
Since the coefficients of $f$ have a finite number of common prime divisors, 
we can ignore $\tuB_1$ and $\tuB_2$. 
Assume that $\GP_{\tuB}(\F_p)\cong\GP_{\tuB_5}(\F_p)$ for some prime $p$. 
Now $f$ is necessarily smooth 
over $\Z$, and there are only finitely many primes $r$ 
(of bad reduction) such that $\varproj{I_4(\tuB)}$ is a point, a line, 
or a union of two lines in $\P^2(\F_r)$. By multiplying the bad primes $r$ 
into the modulus, and using the automorphism group orders 
from \cite[\S7.4]{VL15}, we deduce that 
\eqref{eq:aut-p} is quasi-polynomial. 
Consider the splitting behaviour of $f$ in the three remaining cases. 
Let $r$ be an odd prime. One of the following holds:
\begin{enumerate}[start=0]
\item\label{it:f0} $\varproj{I_4(\tuB)}$ is a point 
in $\P^2(\F_r)$ if and only if $f\bmod r$ is (equivalent to) an 
irreducible polynomial in two variables;
\item\label{it:f1} $\varproj{I_4(\tuB)}$ is a line in 
$\P^2(\F_r)$ if and only if $f\bmod r$ is (a scalar multiple of) a square;
\item\label{it:f2} $\varproj{I_4(\tuB)}$ is a union of 
two lines in $\P^2(\F_r)$ if and only if $f\bmod r$ is reducible 
but not (a scalar multiple of) a square.
\end{enumerate}
If $f$ is a polynomial in three indeterminates, 
then \eqref{it:f0} can occur only for a finite number of primes, and 
so can be ignored.  So either $f$ is a square, as  
is every reduction mod $r$, or \eqref{it:f1} occurs for only 
finitely many primes. In either case, the formulae from 
\cite[\S7.4]{VL15} show that \eqref{eq:aut-p} is quasi-polynomial.  
If $f$ is a polynomial in two indeterminates, 
then the splitting of $f$ modulo $r$ is determined by a quasi-polynomial 
condition, again implying that \eqref{eq:aut-p} is quasi-polynomial.  
\end{proof}

\subsection{Derived groups of order $p^4$}
Brahana \cite{Brah51, Brah58} identified 59 
4-dimensional subspaces of the 10-dimensional commutator space
as orbit representatives.
Copetti \cite{Cop04} showed that the spaces he labelled 29 and 31 are 
in the same orbit.  In unpublished work from 2001, M.F.\ Newman and O'Brien
found two additional redundancies. 
\begin{itemize}
\item Spaces 50 and 51 are in the same orbit.
\item Space 35 is in the 
orbit of 30 when $p = 5$;  
it is in the orbit
of 29 when $p \equiv \pm 1 \bmod 5$;  
and it is in the orbit
of 32 when $p \equiv \pm 2 \bmod 5$. 
\end{itemize}
They also identified one missing orbit representative. 
Independently, Eick and Vaughan-Lee \cite{E-MRVL} showed 
that the number of these groups is 57. 
We summarise the outcome.
\begin{thm}\label{thm:lee}
For every prime $p \geq 3$, there are $57$ isomorphism 
classes of $5$-generator groups of order $p^9$, class $2$, and exponent $p$. 
\end{thm}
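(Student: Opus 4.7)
The plan is to realise each $5$-generator group of order $p^9$, class $2$, exponent $p$ as $\GP_{\tuB}(\ff)$ for a skew-symmetric $\tuB\in\Mat_5(\ff[y_1,\ldots,y_4]_1)$, and to invoke \cref{thm:iso-tensors} to recast the classification as a $\GL_5(\ff)\times\GL_4(\ff)$-orbit problem on such matrices, or equivalently a $\GL(V)$-orbit problem on $4$-dimensional subspaces of $\Lambda^2(V)$ for $V=\ff^5$. The starting point is Brahana's enumeration \cite{Brah51,Brah58}, as put on modern foundations by Copetti \cite{Cop04}, which supplies a preliminary list of $59$ candidate orbit representatives.

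The first task is to establish the three known coincidences. The collapses $29\sim 31$ and $50\sim 51$ should hold uniformly in $p$, and I would prove them by exhibiting explicit pairs $(X,Z)\in\GL_5(\Z)\times\GL_4(\Z)$ satisfying the intertwining relation of \cref{thm:iso-tensors}(2); reduction modulo $p$ then settles every prime simultaneously. Orbit $35$ is more delicate, since its identification with one of $\{29,30,32\}$ depends on $p\bmod 5$, essentially because the required change of basis involves data governed by the splitting of the $5$-th cyclotomic polynomial over $\ff$. My plan is to produce three explicit conjugating pairs, one for each of the residue classes $p=5$, $p\equiv\pm 1\pmod 5$, and $p\equiv\pm 2\pmod 5$, so that in every case exactly one further redundancy is removed from Brahana's list and the total count shrinks by~$3$.

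Next, a missing representative (identified in unpublished work of Newman and O'Brien) must be introduced and shown to lie in a new orbit. Its non-equivalence with the remaining $56$ classes is to be certified using the invariants of \cref{th:invariants}; when those invariants fail to separate, as already happens for the pair $(14),(15)$ at order $p^8$ in \cref{5-3-invariants}, they should be supplemented by the order of $\Aut(\GP_{\tuB}(\ff))$, which is computable directly from the matrices. Tabulating these invariants for the $57$ surviving representatives, in the style of \cref{5-3-invariants}, should yield pairwise distinction across all primes $p\geq 3$.

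The main obstacle is completeness: showing that no further orbit representative has been overlooked, uniformly in $p$. The cleanest route is to cross-validate against the independent computation of Eick and Vaughan-Lee \cite{E-MRVL}, whose implementation of the $p$-group generation algorithm \cite{OBrien90} enumerates these groups and returns $57$ for every prime $p\geq 3$. Matching the two enumerations orbit by orbit would close the argument, both by confirming the upper bound and by ensuring that the $p$-dependent collapses in orbit $35$ combine with the uniform coincidences to give precisely the same deficit $3$ in every residue class, so that the balance $59-3+1=57$ is uniform in $p$.
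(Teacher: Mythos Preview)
The paper does not supply a proof of this theorem: it is stated without a proof environment and attributed to unpublished 2001 work of Newman and O'Brien together with the independent enumeration of Eick and Vaughan-Lee \cite{E-MRVL}. Your outline faithfully reconstructs the historical narrative the paper records just before the theorem---Brahana's 59 candidates, Copetti's coincidence $29\sim 31$, the Newman--O'Brien redundancies $50\sim 51$ and the $p$-dependent collapse of orbit $35$, plus one missing representative---so your plan and the paper's account are aligned, and your arithmetic $59-3+1=57$ matches.

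Two minor corrections relative to the paper's actual content. First, for this family the invariants of \cref{th:invariants} already separate all 57 classes: the paper reports that computing them for $p\in\{3,\ldots,37\}$ yields 57 distinct value-sets (recorded in Table~\ref{Table5-4}), so the fallback to $|\Aut(\GP_{\tuB}(\ff))|$ that was needed for the pair $(14),(15)$ at order $p^8$ is unnecessary here. Second, completeness is not argued within the paper at all---the invariants only certify irredundancy, not exhaustiveness---and the paper, exactly as you propose, defers the matching count of 57 to \cite{E-MRVL}.
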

The matrices $\tuB\in\Mat_5(\ff[y_1,y_2,y_3]_1)$ for the groups 
determined 
by the duals of the 60 4-dimensional subspaces are available at \cite{GITPAGE}. 
For $p \in \{3,\ldots,37\}$, we investigated the values of 
invariants from \cref{th:invariants} for these groups;
the resulting 57 distinct sets of invariants support 
the amendments to Brahana's list.  
Table \ref{Table5-4} records the values of 
some distinguishing invariants for the 57 cases.  
We retain our earlier notation;
also $\mathrm{rad}(I)$ denotes the radical of an ideal $I$, and 
$\irr_a(I)$ is the number of irreducible components of $\varaff{I}$,
and $\# \prim(I)$ is the number of ideals in a 
minimal primary decomposition of $I$.

We label as (55) the missed representative, otherwise 
we use Brahana's labels \cite{Brah51, Brah58}; 
we omit cases (31), (35), and (51).
For compactness, we write $I_4=I_4(\tuB)$ and $I^\bullet_3=I_3(\tuBd)$.  
For $p=3$ the value $\#\prim(I^\bullet_3)$ for (26) is $3$, not~$4$.

\begin{table}[htp]
\begin{center}
{\small
\begin{tabular}{l|r|c|c|c|r|c|c|c}
 Case & $n_p(I_4)$ & $\deg(\Rad(I_4)$ & $\irr_a(I_4)$ & $\#\prim(I_4)$ & $n_p(I^\bullet_3)$ & $\deg(\Rad(I^\bullet_3))$ & $\irr_a(I^\bullet_3)$ & $\#\prim(I^\bullet_3)$ \\ \hline 
 (0) & $p^4$ & $1$ & $1$ & $1$ & $p^4$ & $1$ & $1$ & $2$ \\ \hline
 (1) & $p^3+p^2-p$ & $2$ & $1$ & $1$ & $2p^3-p$ & $2$ & $2$ & $3$ \\ \hline
 (2) & $p^3-p^2+p$ & $2$ & $1$ & $1$ & $p$ & $2$ & $1$ & $2$ \\ \hline
 (3) & $p^3$ & $2$ & $1$ & $1$ & $p^3$ & $1$ & $1$ & $2$ \\ \hline
 (4) & $2p^2-p$ & $3$ & $2$ & $2$ & $p^3+p^2-p$ & $1$ & $2$ & $4$ \\ \hline
 (5) & $p^2+p-1$ & $2$ & $2$ & $3$ & $p^2+p-1$ & $2$ & $2$ & $3$ \\ \hline
 (6) & $p^2$ & $2$ & $1$ & $3$ & $p^2$ & $2$ & $1$ & $3$ \\ \hline
 (7) & $2p^3-p^2$ & $2$ & $2$ & $2$ & $p^4$ & $1$ & $1$ & $3$ \\ \hline
 (8) & $p^3+p^2-p$ & $1$ & $2$ & $2$ & $p^4$ & $1$ & $1$ & $4$ \\ \hline
 (9) & $p^3$ & $1$ & $1$ & $2$ & $p^4$ & $1$ & $1$ & $3$ \\ \hline
 ($9'$) & $p^3$ & $1$ & $1$ & $2$ & $p^4$ & $1$ & $1$ & $4$ \\ \hline
 (10) & $p^3+p-1$ & $1$ & $2$ & $2$ & $p^3+p^2-1$ & $1$ & $2$ & $2$ \\ \hline
 (11) & $p^3$ & $1$ & $1$ & $2$ & $p^3$ & $1$ & $1$ & $2$ \\ \hline
 (12) & $3p^2-2p$ & $3$ & $3$ & $3$ & $p^3+2p^2-2p$ & $1$ & $3$ & $6$ \\ \hline
 (13) & $3p^2-2p$ & $3$ & $3$ & $4$ & $p^3$ & $1$ & $1$ & $5$ \\ \hline
 (14) & $2p^2-1$ & $2$ & $3$ & $4$ & $4p^2-2p-1$ & $4$ & $4$ & $6$ \\ \hline
 (15) & $2p^2-p$ & $2$ & $2$ & $2$ & $p^3+p^2-p$ & $1$ & $2$ & $4$ \\ \hline
 (16) & $2p^2-p$ & $2$ & $2$ & $4$ & $2p^2-p$ & $2$ & $2$ & $4$ \\ \hline
 (17) & $2p^2-p$ & $2$ & $2$ & $4$ & $3p^2-2p$ & $3$ & $3$ & $5$ \\ \hline
 (18) & $2p^2-p$ & $2$ & $2$ & $3$ & $p^3$ & $1$ & $1$ & $5$ \\ \hline
 (19) & $p^2+2p-2$ & $1$ & $3$ & $4$ & $3p^2+p-3$ & $3$ & $4$ & $5$ \\ \hline
 (20) & $p^2+p-1$ & $1$ & $2$ & $3$ & $2p^2+p-2$ & $2$ & $3$ & $4$ \\ \hline
 ($20'$) & $p^2+p-1$ & $1$ & $2$ & $3$ & $2p^2-1$ & $2$ & $2$ & $4$ \\ \hline
 ($20''$) & $p^2+p-1$ & $1$ & $2$ & $4$ & $3p^2-p-1$ & $3$ & $3$ & $6$ \\ \hline
 (21) & $p^2$ & $3$ & $2$ & $3$ & $p^3$ & $1$ & $1$ & $4$ \\ \hline
 ($21'$) & $p^2$ & $1$ & $1$ & $2$ & $p^3$ & $1$ & $1$ & $4$ \\ \hline
 (22) & $p^2$ & $3$ & $2$ & $2$ & $p^3$ & $1$ & $2$ & $4$ \\ \hline
 (23) & $p^2$ & $1$ & $1$ & $3$ & $p^2$ & $1$ & $1$ & $3$ \\ \hline
 (24) & $p^2$ & $1$ & $2$ & $3$ & $p^2+p-1$ & $3$ & $3$ & $4$ \\ \hline
 (25) & $p^2$ & $1$ & $1$ & $3$ & $2p^2-p$ & $2$ & $2$ & $5$ \\ \hline
 (26) & $p^2$ & $1$ & $1$ & $3$ & $2p^2-p$ & $2$ & $2$ & $4$ \\ \hline
 (27) & $p^2$ & $1$ & $1$ & $4$ & $3p^2-2p$ & $3$ & $3$ & $4$ \\ \hline
 (28) & $p^2$ & $3$ & $1$ & $1$ & $p^3$ & $1$ & $1$ & $2$ \\ \hline
 (29) & $5p-4$ & $5$ & $5$ & $5$ & $10p-9$ & $10$ & $10$ & $11$ \\ \hline
 (30) & $4p-3$ & $4$ & $4$ & $4$ & $7p-6$ & $7 $ & $7$ & $8$ \\ \hline
 (32) & $3p-2$ & $5$ & $4$ & $4$ & $4p-3$ & $10$ & $7$ & $8$ \\ \hline
 (33) & $3p-2$ & $3$ & $3$ & $3$ & $4p-3$ & $4$ & $4$ & $5$ \\ \hline
 (34) & $3p-2$ & $3$ & $3$ & $3$ & $5p-4$ & $5$ & $5$ & $6$ \\ \hline
 (36) & $2p-1$ & $5$ & $3$ & $3$ & $p$ & $10$ & $4$ & $5$ \\ \hline
 (37) & $2p-1$ & $4$ & $3$ & $3$ & $3p-2$ & $7$ & $5$ & $6$ \\ \hline
 (38) & $2p-1$ & $2$ & $2$ & $2$ & $3p-2$ & $3$ & $3$ & $4$ \\ \hline
 (39) & $2p-1$ & $2$ & $2$ & $3$ & $2p-1$ & $4$ & $2$ & $4$ \\ \hline
 (40) & $2p-1$ & $2$ & $2$ & $2$ & $2p-1$ & $2$ & $2$ & $3$ \\ \hline
 (41) & $2p-1$ & $2$ & $2$ & $2$ & $p^2+p-1$ & $2$ & $2$ & $5$ \\ \hline
 (42) & $2p-1$ & $2$ & $2$ & $2$ & $p^2+p-1$ & $2$ & $2$ & $4$ \\ \hline
 (43) & $p$ & $3$ & $1$ & $2$ & $p^3$ & $1$ & $1$ & $3$ \\ \hline
 (44) & $p$ & $1$ & $1$ & $1$ & $p^2$ & $2$ & $1$ & $3$ \\ \hline
 (45) & $p$ & $2$ & $1$ & $3$ & $p$ & $2$ & $1$ & $3$ \\ \hline
 (46) & $p$ & $3$ & $2$ & $2$ & $2p-1$ & $4$ & $3$ & $4$ \\ \hline
 (47) & $p$ & $1$ & $1$ & $1$ & $p$ & $1$ & $1$ & $2$ \\ \hline
 (48) & $p$ & $4$ & $2$ & $2$ & $p$ & $7$ & $3$ & $4$ \\ \hline
 (49) & $p$ & $3$ & $2$ & $2$ & $p$ & $5$ & $3$ & $4$ \\ \hline
 (50) & $p$ & $5$ & $3$ & $3$ & $2p-1$ & $10$ & $6$ & $7$ \\ \hline
 (52) & $p$ & $5$ & $2$ & $2$ & $1$ & $10$ & $3$ & $4$ \\ \hline
 (53) & $1$ & $5$ & $2$ & $2$ & $p$ & $10$ & $3$ & $4$ \\ \hline
 (54) & $1$ & $5$ & $1$ & $1$ & $1$ & $10$ & $2$ & $3$ \\ \hline
 (55) & $p^2$ & $1$ & $1$ & $3$ & $p^2$ & $3$ & $2$ & $3$ 
\end{tabular}
} 
\end{center}
\caption{Invariants for 5-generator groups with 
derived group of order $p^4$} \label{Table5-4}
\end{table}

\subsection{Derived groups of order $p^5$}
\label{derived-5}
Eick and Vaughan-Lee \cite{E-MRVL} showed that the number of these groups is 
$63 + 3p + 2\gcd(p, 2) + \gcd(p, 3) + 2 \gcd(p - 1, 3) + \gcd(p - 1, 4)$. 
No listing of isomorphism types is known. 
Brahana \cite{Brah60} considered 11-groups of this type, 
listing eight isomorphism types.
We report just one computation: 
by generating ``random" $5\times 5$ skew-symmetric 
matrices in five indeterminates over $\F_5$ and partitioning 
them using the invariants of \cref{th:invariants},
we identified 60 of the 87 isomorphism types. 

\section{Providing access to our results}\label{sec:magma}
The lists of skew-symmetric matrices 
defining groups of order $p^{4+ 3}, p^{5 + 3}$ and $p^{5+4}$ are 
available at \cite{GITPAGE}.
We provide {\sc Magma} functions to construct from a 
group of class 2 and exponent $p$ the matrices $\tuB$ and ${\tuBd}$
and vice versa; and the invariants listed in \cref{th:invariants}. 
Our machinery relies on Gr\"obner basis computations, 
but typically has good practical performance. 
Using {\sc Magma} 2.28-15 on a 2.6GHz machine, Tables
\ref{5-3-invariants} and \ref{Table5-4}
were constructed in 17 and 124 minutes of CPU time respectively.
The most expensive calculation is 
the primary decomposition of an ideal; it 
can be omitted, as was done in the calculations reported in Section
\ref{derived-5}. 


\end{document}